\documentclass[AMA,STIX1COL]{WileyNJD-v2}

\articletype{Paper}%

\raggedbottom

\usepackage{comment}

\newtheorem{Thm}{Theorem}
\newtheorem{Cor}{Corollary}

\newtheorem{Lem}{Lemma}

\newtheorem{Def}{Definition}

\newtheorem{Exmp}{Example}
\newtheorem{rmk}{Remark}

\newcommand{\be}[1]{\begin{equation}\label{#1}}
	\newcommand{\ee}{\end{equation}}

\begin{document}

\title{Edge convex smooth interpolation curve networks with minimum $L_{\infty}$-norm of the second derivative}

\author{Krassimira Vlachkova}

\authormark{Krassimira Vlachkova}

\address{\orgdiv{Faculty of Mathematics and Informatics}, \orgname{Sofia University ``St. Kliment Ohridski''}, \orgaddress{\state{1164 Sofia}, \country{Bulgaria}}}

\corres{Krassimira Vlachkova,\\ \orgdiv{Faculty of Mathematics and Informatics},\\ \orgname{Sofia University ``St. Kliment Ohridski''},\\ \orgaddress{\state{1164 Sofia}, \country{Bulgaria}}.\\ \email{krassivl@fmi.uni-sofia.bg}}

\abstract{We consider the extremal problem of interpolation of convex scattered data in $\mathbb{R}^3$ by smooth edge convex curve networks with minimal $L_p$-norm of the second derivative for $1<p\leq\infty$. The problem for $p=2$ was set and solved by Andersson et al. (1995). Vlachkova (2019) extended the results in (Andersson et al., 1995) and solved the problem for $1<p<\infty$. The minimum edge convex $L_p$-norm network for $1<p<\infty$ is obtained from the solution to a system of nonlinear equations with coefficients determined by the data. The solution in the case $1<p<\infty$ is unique for strictly convex data. The corresponding extremal problem for $p=\infty$ remained open.
		Here we show that the extremal interpolation problem for $p=\infty$ always has a solution. We give a characterization of this solution. We show that a solution to the problem for $p=\infty$ can be found by solving a system of nonlinear equations in the case where it exists.
		}

\keywords{scattered data interpolation, extremal interpolation, convex data, minimum norm network, curve network, splines}

\maketitle

\section{Introduction}\label{s0}

Interpolation of scattered data in $\mathbb{R}^3$ is an important problem in applied mathematics and finds applications in various fields such as computer graphics and animation, scientific visualization, medicine (computer tomography), automotive, aircraft and ship design, architecture, and many more. In general the  problem can be formulated as follows: Given a set of points $P_i=(x_i,y_i,z_i)$ $\in \mathbb{R}^3$,\ $i=1,\dots ,n$, find a bivariate function $F(x,y)$ defined in a certain domain $D$ containing points $V_i=(x_i,y_i)$, such that $F$ possesses continuous partial derivatives up to a given order
and $F(x_i,y_i)=z_i$.
Various methods for solving this problem were proposed and applied, see e.g. \cite{FH,FN,LF,MLLMPDS,D,Am,ALP,CG,DT}.
Nielson~\cite{N} proposed a three steps method for solving the problem as follows:

\smallskip
{\sl Step 1.} {\it Triangulation}. Construct a triangulation $T$ of
$V_i,\ i=1,\dots n$.

\smallskip
{\sl Step 2.} {\it Minimum norm network}. The interpolant $F$ and
its first order partial derivatives are defined on the edges of $T$
to satisfy an extremal property. The obtained minimum norm network
is a cubic curve network, i.~e. on every edge of $T$ it is a cubic
polynomial.

\smallskip
{\sl Step 3.} {\it Interpolation surface}. The obtained network is extended to $F$ by an
appropriate {\it blending method}.

\smallskip
Andersson et al. \cite{AEIV} focused on Step~2 of the
above method, namely the construction of the minimum norm network.
The authors gave a new proof of Nielson's result by using a different approach. They constructed a system
of simple linear curve networks called {\it basic curve networks} and then
represented the second derivative of the minimum norm network as a linear combination of these
basic curve networks. The new approach allows to consider and handle the case where the data are convex and we seek a convex interpolant. Andersson et al. formulate the corresponding extremal constrained interpolation problem of finding a minimum norm network that is convex along the edges of the triangulation. The extremal network is characterized as a solution to a nonlinear system of equations.
The authors propose a Newton-type algorithm for solving this type of systems. The validity and convergence of the algorithm were studied further in \cite{V3}. We note that the edge convex minimum norm network may not be globally convex.

Vlachkova~\cite{V} extended the results in \cite{AEIV} and solved the extremal unconstrained problem of interpolation of scattered data by minimum $L_p$-norms networks for  $1<p<\infty$. The minimum $L_p$-norm network for $1<p<\infty$ is obtained from the solution to a nonlinear (except for $p=2$ when it is linear) system of equations  with coefficients determined by the data. The solution in the case $1<p<\infty$ is unique.
The approach proposed in \cite{V} can not be applied to the case where $p=\infty$. Recently, Vlachkova~\cite{V5} established   the existence of a solution for $p=\infty$ of the same type as in the case where $1<p<\infty$.

The extremal constrained interpolation problem for $1<p<\infty$ was considered in \cite{V4} where the existence and the uniqueness of the solution in the case of strictly convex data were established and  a complete characterization of the solution using the basic curve networks  was presented. The approach used in \cite{V4} does not apply in the case where $p=\infty$ and the problem remained open.

In this paper we consider the extremal constrained interpolation problem for $p=\infty$ and prove the existence of a solution of a certain type. More precisely,  this solution on each edge of the underlying triangulation $T$ is a quadratic spline function with at most one knot. Moreover, we show that a solution to the problem for $p=\infty$ can be found by solving a system of nonlinear equations in the case where it exists.

This paper is an extended version of the results previously announced in \cite{V7} without proofs.

The paper is organized as follows. In Section~\ref{s1} we introduce notation, formulate the constrained extremal problem for interpolation by edge convex minimum $L_{\infty}$-norm networks, and present some related results. Our main results are obtained in Section~\ref{s2}. An example illustrating our main results is presented and discussed in Section~\ref{s3}. In the final Section~\ref{s4} we summarize our conclusions.

\section{Preliminaries and related work}\label{s1}

Let $n\geq 3$ be an integer and $P_i:=(x_i,y_i,z_i),\ i=1,\dots ,n$
be different points in $\mathbb{R}^3$. We call this set of points {\it
	data}. The data are {\it scattered}\footnote{Note that this definition of scattered data slightly misuses the commonly accepted meaning of the term. It allows data with some structure among points ${\bf v}_i$. We have opted to do this in order to cover 	all cases where our presentation and results are valid.}
if the projections
$V_i:=(x_i,y_i)$ onto the plane $Oxy$ are different and
non-collinear.
\begin{Def}\label{def1}
	A collection of non-overlapping, non-degenerate triangles in
	$\mathbb{R}^2$ is a {\it triangulation} of the points $V_i,\ i=1,\dots ,n$,
	if the set of the vertices of the triangles coincides with the set
	of the points $V_i,\ i=1,\dots ,n$.
\end{Def}

For a given triangulation $T$ there is a unique continuous function $L : D\rightarrow\mathbb{R}^1$ that is linear inside each of the triangles of $T$ and interpolates the data.
\begin{Def}\label{def2}
	Scattered data in $D$ are {\it convex} if there exists a triangulation $T$ of $V_i$ such that the corresponding function $L$ is convex. The data are {\it strictly convex} if they are convex and the gradient of $L$ has a jump discontinuity across each edge inside $D$.
\end{Def}
Hereafter we assume that the data are convex and $T$ is the associated triangulation of the points $V_i,\ i=1,\dots ,n$. Furthermore, for the sake of simplicity, we assume that $D$ is a convex polygonal domain and is formed by the union of all triangles in $T$. We note that in the case of strictly convex data the triangulation $T$ is unique and is obtained as the orthogonal projection of the convex hull of the data points $P_i$, $i=1,\dots ,n$, onto the plane $Oxy$, i.e. $T$ is the Delaunay triangulation of $V_i$.

The set of
the edges of the triangles in $T$ is denoted by $E$. If there is an
edge between $V_i$ and $V_j$, it will be referred to by
$e_{ij}$ or simply by $e$ if no ambiguity arises.

\begin{Def}\label{def3}
	{A {\it curve network} is a collection of real-valued univariate functions
		$\{f_e\}_{e\in E}$
		defined on the edges in $E$.}
\end{Def}
With any real-valued bivariate function $F$ defined on $D$ we naturally
associate the curve network defined as the restriction of $F$ on the edges
in $E$, i.~e. for $e=e_{ij}\in E$,
\begin{equation}\label{e0}
	\begin{split}
		f_e(t) := F\Bigl(\bigl(1-\frac{t}{\| e\|}\bigr)x_i+\frac{t}{\| e\|}\,x_j,\,
		\bigl(1-\frac{t}{\| e\|}\bigr)y_i+\frac{t}{\| e\|}\,y_j\Bigr), \\
		\text{where } 0\le t\le \| e\| \ \text{and} \ \|e\|=\sqrt{(x_i-x_j)^2+(y_i-y_j)^2}.
	\end{split}
\end{equation}
Furthermore, according to the context $F$ will denote either a
real-valued bivariate function or a curve network defined by
(\ref{e0}). For $p$, such that $1<p\leq\infty$, we introduce the
following class of {\it smooth interpolants}
$$
{\cal F}_p:=\{ F(x,y)\in C^1(D)\, | \, F(x_i,y_i)=z_i,\ i=1,\dots ,n,\ f'_e\in AC,\ f^{\prime\prime}_e\ \in
L_p,\ e\in E\},
$$
and the corresponding class of so-called {\it smooth interpolation edge convex curve networks}
\be{e100}
{\cal C}_p(E):=\left\{ F_{|E}=\{f_e\}_{e\in E}\ |\ F (x,y)\in {\cal
	F}_p,\ f_e''\geq 0,\ e\in E\right\},
\ee
where $C^1(D)$ is the class of bivariate functions defined in $D$ which possess continuous first order partial derivatives,
$AC$ is the class of univariate absolutely continuous functions defined in $[ 0,\| e\|]$, $L_p$ for $1<p<\infty$ is the class of univariate functions defined in $[ 0,\| e\|]$ whose p-th power of the absolute value is Lebesgue integrable, and $L_{\infty}$ is the class of the bounded univariate functions defined in $[ 0,\| e\|]$.

In \cite{AEIV} the following lemma is proved.

\begin{Lem}\label{lem1}
If the data are convex (strictly convex) then there exists a convex (strictly convex) function in the class $C^{\infty}(\mathbb{R}^2)$ interpolating the data points $P_i$, $i=1,\dots ,n$.
	\end{Lem}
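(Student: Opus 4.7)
The plan is to smooth the piecewise linear convex interpolant $L$ associated with the triangulation $T$, and then to correct the interpolation errors the smoothing introduces, exploiting a strict-convexity cushion added beforehand. Since the data are convex, $L$ exists on $D$, and we first extend it to a convex function $\tilde L$ on $\mathbb{R}^2$, for instance as the supremum of all affine minorants of $L$ on $D$.

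Next I would introduce a strictly convex regulariser: let $Q(x,y):=(x-x_0)^2+(y-y_0)^2$ and let $\phi_\eta$ be a radially symmetric, non-negative, $C^\infty$ mollifier. For parameters $\epsilon,\eta>0$, put
\[
G:=(\tilde L+\epsilon Q)*\phi_\eta .
\]
Since $\mathrm{Hess}(\tilde L+\epsilon Q)\ge 2\epsilon I$ as a measure and convolution preserves such a lower bound, $G\in C^\infty(\mathbb{R}^2)$ with $\mathrm{Hess}(G)\ge 2\epsilon I$ pointwise. A direct mollification estimate, using that $\tilde L$ is locally Lipschitz, gives $G(V_i)=z_i+\epsilon Q(V_i)+\delta_i(\eta)$, where $\delta_i(\eta)\to 0$ as $\eta\to 0$. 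Choose $C^\infty$ functions $\psi_i$ with $\psi_i(V_j)=\delta_{ij}$ and define
\[
F:=G-\sum_{i=1}^n\bigl(\epsilon Q(V_i)+\delta_i(\eta)\bigr)\psi_i .
\]
Then $F\in C^\infty(\mathbb{R}^2)$ and $F(V_i)=z_i$ for every $i$.

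For convexity, the operator norm of the correction's Hessian is bounded by $\epsilon\sum_i Q(V_i)K_i+\sum_i\delta_i(\eta)K_i$, with $K_i:=\|\mathrm{Hess}\,\psi_i\|_\infty$. Choosing the $\psi_i$ so that $\sum_i Q(V_i)K_i<2$, and then $\eta$ small enough that $\sum_i\delta_i(\eta)K_i<\epsilon$, brings the correction below the slack $2\epsilon$, yielding $\mathrm{Hess}(F)>0$; in fact $\mathrm{Hess}(F)\ge cI$ for some $c>0$, so $F$ is strictly convex. This covers both parts of the lemma at once, since strict convexity implies convexity.

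The main obstacle is the balancing of the parameters. Since $\epsilon$ appears on both sides of the critical inequality it cancels, leaving the purely geometric requirement $\sum_i Q(V_i)K_i<2$ on the bumps $\psi_i$. For tightly clustered data, disjointly supported bumps can have large $K_i$, and one should instead use globally supported $C^\infty$ bases (such as a smooth partition of unity or a radial basis system adapted to $\{V_i\}$), for which $K_i$ is controlled by the data geometry and the inequality can be achieved; once valid $\psi_i$ are in hand, the construction yields the required $C^\infty$ (strictly) convex interpolant.
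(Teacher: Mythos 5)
The paper does not actually prove Lemma~\ref{lem1}; it imports it from Andersson et al.~\cite{AEIV}, so your argument can only be judged on its own terms. It has a genuine gap, and you have correctly located it yourself: the inequality $\sum_i Q(V_i)K_i<2$ is not a removable technicality but is in general unachievable, so the construction does not close. Because the correction you subtract at each $V_i$ contains the term $\epsilon Q(V_i)$, which is of the \emph{same} order $\epsilon$ as the convexity cushion $2\epsilon I$, the parameter $\epsilon$ cancels and you are left with a condition on the cardinal functions $\psi_i$ alone; this condition fails for typical data. Indeed, if $V_j,V_i,V_k$ are collinear with $V_i$ the midpoint at distance $d$ from each neighbour, then any $C^2$ function with $\psi_i(V_i)=1$ and $\psi_i(V_j)=\psi_i(V_k)=0$ satisfies $\psi_i(V_j)-2\psi_i(V_i)+\psi_i(V_k)=-2=d^2u''(\xi)$ along that line, so $K_i\ge 2/d^2$ and $Q(V_i)K_i\ge 2Q(V_i)/d^2>2$ unless $V_i$ lies within distance $d$ of your chosen centre $(x_0,y_0)$ --- which generally cannot be arranged for all such $i$ at once (and near-collinear triples give the same obstruction up to constants). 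Passing to globally supported or radial bases does not help, because the lower bound on $K_i$ comes solely from the cardinal conditions $\psi_i(V_j)=\delta_{ij}$, not from the choice of support.

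There is also a structural reason the argument cannot be correct as stated: you conclude $\mathrm{Hess}(F)\ge cI$, i.e.\ a uniformly convex interpolant, for \emph{arbitrary} convex data, and assert this ``covers both parts at once''. But for convex data that are not strictly convex no strictly convex interpolant exists: if $V_4$ lies inside the triangle $V_1V_2V_3$ and the four data points are coplanar, then $V_4=\sum_{j=1}^{3}\mu_jV_j$ with $\mu_j>0$ and $z_4=\sum_{j}\mu_jz_j$, while any strictly convex $F$ gives $F(V_4)<\sum_{j}\mu_jF(V_j)=z_4$. Any proof reaching your stated conclusion for all convex data is therefore false, and the failure must occur exactly at the parameter-balancing step. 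A workable repair for strictly convex data is to absorb the regulariser into the data \emph{before} triangulating: take the convex piecewise linear interpolant $L^{(\epsilon)}$ of the perturbed values $z_i-\epsilon Q(V_i)$, which exists for small $\epsilon$ since strict convexity is an open condition; then $(\tilde L^{(\epsilon)}+\epsilon Q)*\phi_\eta$ has Hessian $\ge 2\epsilon I$ and misses the data only by mollification errors $\delta_i(\eta)\to 0$ \emph{for fixed} $\epsilon$, which can be removed by fixed bumps once $\eta$ is small enough that $\sum_i\delta_i(\eta)K_i<2\epsilon$. The merely convex case needs a separate, non-uniformly-convex construction.
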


From Lemma~\ref{lem1} we immediately obtain the following

\begin{Cor}\label{cor1}
	The class ${\cal C}_p(E)$ is nonempty.
	\end{Cor}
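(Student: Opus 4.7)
The plan is to exhibit an explicit member of $\mathcal{C}_p(E)$ by invoking Lemma~\ref{lem1} and restricting the resulting smooth convex interpolant to the edges. So first I would apply Lemma~\ref{lem1} to obtain a function $F \in C^\infty(\mathbb{R}^2)$ which is convex on $\mathbb{R}^2$ and satisfies $F(x_i,y_i) = z_i$ for $i = 1,\dots,n$. Then I would consider the restriction of $F$ to $D$ and the associated curve network $\{f_e\}_{e \in E}$ defined by formula (\ref{e0}).

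The next step is to check that this $F$ lies in $\mathcal{F}_p$ and that its associated curve network lies in $\mathcal{C}_p(E)$. The membership in $C^1(D)$ and the interpolation conditions are immediate from the choice of $F$. For each edge $e \in E$, the function $f_e$ is the composition of an affine map from $[0,\|e\|]$ into $\mathbb{R}^2$ with the $C^\infty$ function $F$, hence $f_e \in C^\infty[0,\|e\|]$. Consequently $f'_e$ is Lipschitz and therefore absolutely continuous on $[0,\|e\|]$, and $f''_e$ is continuous on a compact interval, so it is bounded and belongs to $L_p[0,\|e\|]$ for every $p$ with $1 < p \leq \infty$.

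Finally, I would verify the edge-convexity condition $f''_e \geq 0$. Since $F$ is convex on $\mathbb{R}^2$, its restriction to any line segment is a convex univariate function; applied to the segment underlying $e$, this gives $f''_e \geq 0$ on $[0,\|e\|]$. Hence $\{f_e\}_{e\in E} \in \mathcal{C}_p(E)$, which proves the class is nonempty.

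There is no real obstacle here: the entire content of the corollary is packaged inside Lemma~\ref{lem1}, and the only thing left to do is read off the defining properties of $\mathcal{F}_p$ and $\mathcal{C}_p(E)$ from the smoothness and convexity of the bivariate interpolant.
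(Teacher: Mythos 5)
Your proposal is correct and follows exactly the route the paper intends: the paper derives the corollary immediately from Lemma~\ref{lem1} by taking the $C^\infty$ convex interpolant and observing that its edge restrictions satisfy all the defining conditions of ${\cal C}_p(E)$. You have simply spelled out the routine verifications (smoothness, integrability, and edge convexity of the restrictions) that the paper leaves implicit.
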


The smoothness of the interpolation curve network $F=\{f_e\}_{e\in E}\in {\cal C}_p(E)$
geometrically means that at each point $P_i$ there is a {\it tangent plane} to $F$,
where a plane is {\it tangent} to the curve network at the point $P_i$ if it contains the tangent vectors at $P_i$ of the curves incident to $P_i$.

Inner product and
$L_p$-norm are defined in ${\cal C}_p(E)$ by
\begin{eqnarray*}
	&&\langle F, G\rangle =\int _{E} FG=\sum _{e\in E}\int _{0}^{\|e\| }f
	_{e}(t)g_{e}(t)dt,
	\\
	&&\|F\|_p:= \left( \sum_{e\in E} \int_0^{\| e\|
	}|f_e(t)|^pdt\right)^{1/p}, \quad 1< p<\infty ,
	\\[1ex]
	&&\|F\|_{\infty}:= \max_{e\in E}\ \|f_e\|_{\infty}.
\end{eqnarray*}
where $F\in {\mathcal{C}}_{p}(E)$ and $G:=\{g_{e}\}_{e\in E}\in {\mathcal{C}}
_{p}(E)$.
We denote the networks of the
second derivative of $F$ by $F^{\prime\prime}:=\{f''_e\}_{e\in E}$ and
consider the following extremal problem:
$$({\bf P}_p)\quad  \mbox{\it\ Find\ }F^*\in {\cal C}_p(E)\  \mbox{\it\
	such that }\| F^{*\prime\prime}\|_p=\inf_{F\in {\cal C}_p(E)}\|
F''\|_p.
$$

Problem $({P}_p)$ is a generalization of the classical univariate extremal problem $(\tilde{P}_p)$ for interpolation of convex data in $\mathbb{R}^2$ by a univariate convex function with minimal $L_p$-norm of the second
derivative. Hornung~\cite{Horn} considered the problem $(\tilde{P}_p)$ for $p=2$. Iliev and Pollul~\cite{IP1} considered the case $p=\infty$ and proved that problem $(\tilde{P}_{\infty})$ has a quadratic spline solution characterized by the existence of a core interval on which the second derivative is the
positive part of a perfect spline and that all solutions agree on core intervals. Iliev and Pollul~\cite{IP2}, and Micchelli et al.~\cite{MSSW} studied in detail the problem  for $1<p<\infty$.\footnote{Iliev and Pollul~\cite{IP2}, and Micchelli et al.~\cite{MSSW} considered the more general problem of minimum $L_p$-norm of the $k$-th derivative which is non-negative, $k\geq 2$.}

For $i=1,\dots ,n$ let $m_i$ denote the degree of the vertex $V_i$,
i.~e. the number of the edges in $E$ incident to $V_i$.
Furthermore, let $\{ e_{ii_1},\dots ,e_{ii_{m_i}}\}$ be the edges
incident to $V_i$ listed in clockwise order around $V_i$. The first
edge $e_{ii_1}$ is chosen so that the coefficient
$\lambda_{1,i}^{(s)}$ defined below is not zero - this is always
possible. A {\it basic curve network} $B_{is}$ is defined on $E$ for
any pair of indices $is$, such that $i=1,\dots ,n$ and $s=1,\dots
,m_i-2$, as follows (see Figure~\ref{basic}):
$$
B_{is}:= \left\{
\begin{array}{ll}
	\lambda^{(s)}_{r,i}\left( 1-\frac{t}{\| e_{ii_{s+r-1}} \|
	}\right) & {\rm on}\ e_{ii_{s+r-1}},\ r=1,2,3,\\
	& \ \ \ 0\leq t\leq \| e_{ii_{s+r-1}}\| \\[1ex]
	0 & {\rm on\ the\ other\ edges\ of\ }E.
\end{array}
\right.
$$
\begin{figure}
	\centering
	\begin{minipage}{4.cm}
		\centering
		\includegraphics[width=1.\textwidth]{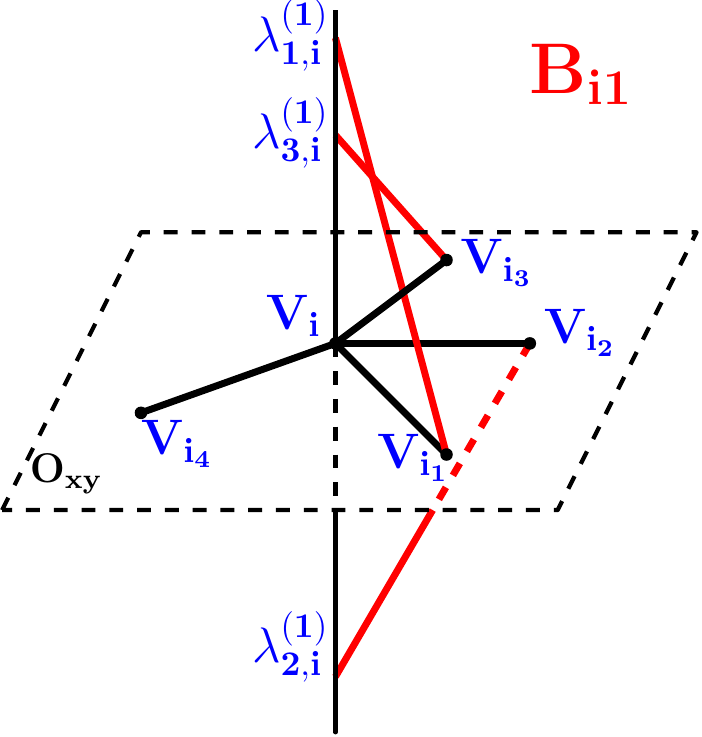}
	\end{minipage}
	~~~~~~~~~~~~~~~~~~~~~~~~~~
	\begin{minipage}{4.cm}
		\centering
		\includegraphics[width=1.\textwidth]{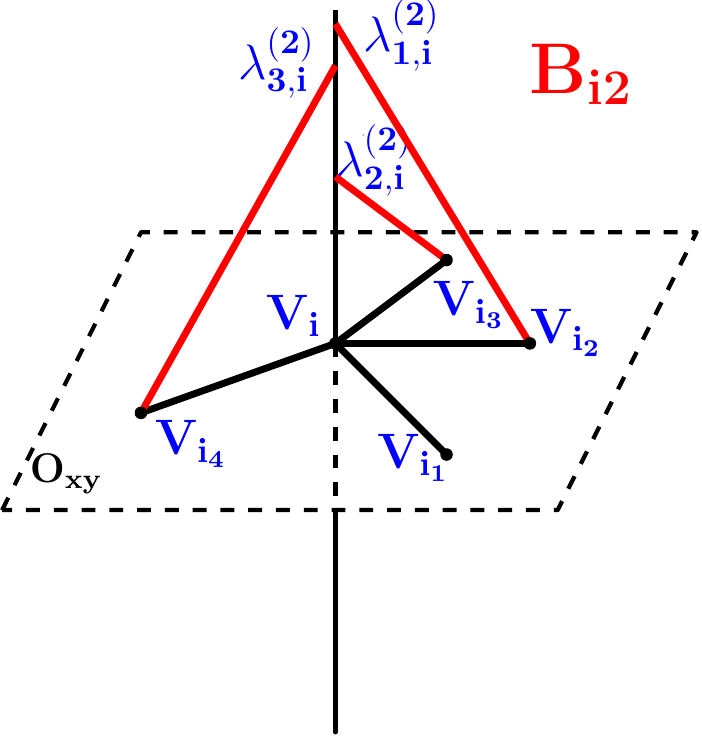}
	\end{minipage}
	\caption{The basic curve networks for vertex $V_i$, $\mbox{deg}(V_i)=4$}\label{basic}
\end{figure}

The coefficients $\lambda_{r,i}^{(s)},\ r=1,2,3$, are
uniquely determined to sum to one and to form a zero linear
combination of the three unit vectors along the edges
$e_{ii_{s+r-1}}$ starting at $V_i$. More precisely, $\lambda_{r,i}^{(s)},\ r=1,2,3$, are the unique solution to the linear system
\be{e3}
\left|
 \begin{array}{lcl}
	\lambda^{(s)}_{1,i}{\bf e_{ii_s}}+ \lambda^{(s)}_{2,i}{\bf e_{ii_{s+1}}} +
	\lambda^{(s)}_{3,i}{\bf e_{ii_{s+2}}} & = & 0\\[1ex]
	\lambda ^{(s)}_{1,i}+ \lambda ^{(s)}_{2,i}+\lambda ^{(s)}_{3,i}
	& = & 1,
\end{array}
\right.
\ee
where ${\bf e_{ii_{s+k}}}$ is the unit vector on the edge $e_{ii_{s+k}}$ starting from $V_i$ to $V_{i_{s+k}},
\ k=0,1,2$.

Note that basic curve networks are associated with points that have
at least three edges incident to them.
We denote by
$N_B$ the set of pairs of indices $is$ for which a basic curve
network is defined, i.~e.,
$$ N_B:=\{is\ |\ m_i\geq 3,\ i=1,\dots, n,\ s=1,\dots, m_i-2\}.$$

With each basic curve network $B_{is}$ for $is\in N_B$ we associate
a number $d_{is}$ defined by
$$
d_{is}=\frac{\lambda^{(s)}_{1,i}}{\| e_{ii_s}\| }
(z_{i_s}-z_i)+\frac{\lambda^{(s)}_{2,i}}{\| e_{ii_{s+1}}\|
}(z_{i_{s+1}}- z_i)+ \frac{\lambda^{(s)}_{3,i}}{\| e_{ii_{s+2}}\|
}(z_{i_{s+2}}-z_i),
$$
which reflects the position of the data in the supporting set of
$B_{is}$.

The following
three lemmas are proved in \cite{AEIV} for $p=2$ but they clearly hold for any $p$, $1<p\leq\infty$.
\begin{Lem}\label{lem2}
	\label{lemma1}
	Functions $B_{is},\ is\in N_{B}$, are linearly independent in $E$.
\end{Lem}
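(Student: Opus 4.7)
My plan is to assume a vanishing linear combination $\sum_{is\in N_B} c_{is} B_{is}\equiv 0$ on $E$ and derive $c_{is}=0$ for every $is\in N_B$. First I would localize the analysis to each edge of the triangulation: on any edge $e_{ij}\in E$ only those basic curve networks centered at $V_i$ or at $V_j$ have nonzero restriction, since by construction $B_{ks}$ vanishes off the edges incident to $V_k$. On $e_{ij}$ the sum of $V_i$-centered contributions is a linear function of $t$ vanishing at $V_j$, while the sum of $V_j$-centered contributions is a linear function vanishing at $V_i$. Since two such linear functions on $[0,\|e_{ij}\|]$ with zeros at opposite endpoints can cancel only if each is identically zero, the vanishing of the combination on $e_{ij}$ forces the contributions from $V_i$ and $V_j$ to vanish separately. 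This reduces the problem to a per-vertex statement.

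Next I would fix a vertex $V_i$ with $m_i\geq 3$ and, for each incident edge $e_{ii_k}$ ($k=1,\dots,m_i$), record the value at $V_i$ of the $V_i$-contribution. Because $B_{is}$ is supported on $e_{ii_s},e_{ii_{s+1}},e_{ii_{s+2}}$ and its value at $V_i$ on the edge $e_{ii_k}$ is $\lambda^{(s)}_{k-s+1,i}$ whenever $s\leq k\leq s+2$, this yields the system
$$\sum_{\max(1,k-2)\leq s\leq\min(m_i-2,k)} c_{is}\,\lambda^{(s)}_{k-s+1,i}=0,\qquad k=1,\dots,m_i.$$
The key observation is its triangular structure: for $k=1$ only $s=1$ contributes, giving $c_{i,1}\lambda^{(1)}_{1,i}=0$; and for each $k=2,\dots,m_i-2$ the equation involves at most $c_{i,k-2},c_{i,k-1},c_{i,k}$, with $c_{i,k}$ multiplied by $\lambda^{(k)}_{1,i}$. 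I would then induct on $k$: once $c_{i,1}=\cdots=c_{i,k-1}=0$, the $k$-th equation collapses to $c_{i,k}\lambda^{(k)}_{1,i}=0$, so it remains to secure $\lambda^{(k)}_{1,i}\neq 0$.

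The point I expect to be the main obstacle is exactly this non-vanishing of $\lambda^{(s)}_{1,i}$ for every $s=1,\dots,m_i-2$. By Cramer's rule applied to (\ref{e3}), $\lambda^{(s)}_{1,i}$ equals the signed area spanned by $\mathbf{e}_{ii_{s+1}}$ and $\mathbf{e}_{ii_{s+2}}$ divided by a fixed nonzero determinant, so it vanishes precisely when these two unit vectors at $V_i$ are anti-parallel. However, $e_{ii_{s+1}}$ and $e_{ii_{s+2}}$ are consecutive edges in the clockwise ordering around $V_i$, hence they bound a common triangle of $T$ whose interior angle at $V_i$ is strictly less than $\pi$; this rules out anti-parallelism and confirms $\lambda^{(s)}_{1,i}\neq 0$, which is precisely what the ordering convention announces. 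The induction then terminates at each vertex, yielding $c_{is}=0$ for all $is\in N_B$ and establishing the claimed linear independence.
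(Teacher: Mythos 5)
The paper gives no proof of this lemma: it states that Lemmas~\ref{lem2}--\ref{lem4} "are proved in \cite{AEIV}," so there is no in-text argument to compare yours against. Judged on its own, your proof is correct and complete. Both main steps are sound: (1) on a fixed edge $e_{ij}$ the $V_i$-centered part of a vanishing combination is a linear function vanishing at $V_j$ and the $V_j$-centered part a linear function vanishing at $V_i$, so evaluating at the two endpoints decouples the vertices; and (2) the resulting system at a vertex $V_i$, read off edge by edge in the order $k=1,\dots,m_i-2$, is lower triangular in $c_{i,1},\dots,c_{i,m_i-2}$ with diagonal entries $\lambda^{(k)}_{1,i}$ (no wrap-around occurs since $s+2\le m_i$). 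You correctly identify the only delicate point, the non-vanishing of these diagonal entries: by Cramer's rule $\lambda^{(s)}_{1,i}$ is proportional to $\det\bigl[{\bf e_{ii_{s+1}}},\,{\bf e_{ii_{s+2}}}\bigr]$, and since $e_{ii_{s+1}}$ and $e_{ii_{s+2}}$ are angularly adjacent edges bounding a non-degenerate triangle of $T$, their angle at $V_i$ lies strictly in $(0,\pi)$, ruling out (anti)parallelism. You could alternatively have invoked the paper's stated convention that the first edge around $V_i$ is chosen so that $\lambda^{(s)}_{1,i}\neq 0$; your geometric argument actually shows this holds for every choice of starting edge. Two small points worth a sentence in a full write-up: the nonsingularity of the Cramer denominator in (\ref{e3}) holds because the tips of three distinct unit vectors lie on the unit circle and are therefore never collinear; and the adjacency claim uses the standing assumption that $D$ is the union of the triangles of $T$, so that the star of each vertex is a fan.
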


\begin{Lem}\label{lem3}
	{$F\in {\cal C}_p(E) \Leftrightarrow \lambda^{(s)}_{1,i}f_{ii_s}'(0)+
		\lambda^{(s)}_{2,i}f_{ii_{s+1}}'(0)+\lambda^{(s)}_{3,i}f_{ii_{s+2}}'(0)=0,\ is\in N_B$}
\end{Lem}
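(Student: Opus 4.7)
The plan is to reduce the biconditional to a linear-algebra question about the one-sided edge derivatives at each fixed vertex $V_i$, with the defining vector identity (\ref{e3}) for the coefficients $\lambda^{(s)}_{r,i}$ doing all the work.

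For the forward direction, assume $F\in{\cal C}_p(E)$, so $F$ is the restriction of a $C^1$ bivariate function on $D$ and a gradient $\nabla F(V_i)\in\mathbb{R}^2$ exists at each vertex. Every one-sided derivative along an incident edge coincides with a directional derivative: $f'_{ii_k}(0)=\nabla F(V_i)\cdot {\bf e_{ii_k}}$. Substituting this into the displayed combination and factoring $\nabla F(V_i)$ out gives $\nabla F(V_i)\cdot \bigl(\lambda^{(s)}_{1,i}{\bf e_{ii_s}}+\lambda^{(s)}_{2,i}{\bf e_{ii_{s+1}}}+\lambda^{(s)}_{3,i}{\bf e_{ii_{s+2}}}\bigr)$, which vanishes by (\ref{e3}).

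For the converse, fix $V_i$ with $m_i\ge 3$ and assume the stated relations for every $s=1,\dots,m_i-2$. The task is to produce a single $\mathbf{g}_i\in\mathbb{R}^2$ satisfying $f'_{ii_k}(0)=\mathbf{g}_i\cdot {\bf e_{ii_k}}$ for every $k$. Once such tangent data are available at every vertex, a $C^1$ bivariate interpolant of the curve network is obtained via a standard blending construction (e.g., Nielson's), placing $F$ in ${\cal C}_p(E)$. To build $\mathbf{g}_i$, let $M$ denote the $m_i\times 2$ matrix whose rows are the unit vectors ${\bf e_{ii_k}}$. Since these vectors span $\mathbb{R}^2$, $M$ has rank $2$, so a vector $u\in\mathbb{R}^{m_i}$ lies in $\mathrm{col}(M)$ exactly when it annihilates the $(m_i-2)$-dimensional space of linear relations $R:=\{w\in\mathbb{R}^{m_i}:\sum_k w_k {\bf e_{ii_k}}=\mathbf{0}\}$. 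By (\ref{e3}), each coefficient triple $(\lambda^{(s)}_{1,i},\lambda^{(s)}_{2,i},\lambda^{(s)}_{3,i})$, zero-padded with support at positions $s,s+1,s+2$, lies in $R$. Because the first edge at $V_i$ is chosen so that $\lambda^{(s)}_{1,i}\neq 0$, these $m_i-2$ padded vectors exhibit a staircase with nonzero leading entries; they are therefore linearly independent, and a dimension count forces them to be a basis of $R$. The hypothesis is exactly the orthogonality of $(f'_{ii_k}(0))_{k=1}^{m_i}$ to each basis element, whence $(f'_{ii_k}(0))\in\mathrm{col}(M)$ and $\mathbf{g}_i$ exists. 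The case $m_i=2$ is immediate since the two incident edges are not parallel.

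The main obstacle is the spanning claim in the converse: the $m_i-2$ supplied relations must form a basis of $R$, not merely a subset of it. The convention $\lambda^{(s)}_{1,i}\neq 0$ is precisely what delivers the staircase pattern that yields linear independence; without it, the padded coefficient vectors could collapse and the stated hypothesis would be strictly weaker than tangent-plane compatibility at $V_i$. Once this linear-algebraic step is in hand, the remaining blending step is routine.
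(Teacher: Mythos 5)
Your proof is correct and is essentially the intended argument: the paper gives no proof of this lemma, deferring to Andersson et al.\ \cite{AEIV}, and your reduction---one-sided edge derivatives as directional derivatives $\nabla F(V_i)\cdot{\bf e_{ii_k}}$ combined with the defining relation (\ref{e3}) for the forward direction, and the echelon/dimension-count identification of the zero-padded $\lambda$-triples with a basis of the $(m_i-2)$-dimensional relation space $R=\mathrm{col}(M)^{\perp}$ for the converse---is exactly the linear algebra the lemma encodes, with the convention $\lambda^{(s)}_{1,i}\neq 0$ used for precisely its intended purpose (without it the padded relations need not be independent and the condition would be strictly weaker than tangent-plane existence). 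The one step you take on faith, the $C^1$ blending extension needed to place the network in ${\cal C}_p(E)$ in the converse direction, is the standard Nielson-type transfinite interpolation that the paper's whole framework presupposes (Step 3 of the method in the introduction), so deferring it is consistent with how the paper itself treats the lemma.
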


\begin{Lem}\label{lem4}
	{$F\in {\cal C}_p(E) \Leftrightarrow \langle
		F'',B_{is}\rangle=d_{is},\ is\in N_B$}
\end{Lem}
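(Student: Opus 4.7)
The plan is to derive, by integration by parts against the hat-shaped test function $B_{is}$, an identity expressing $\langle F'',B_{is}\rangle$ as $d_{is}$ minus a linear combination of one-sided derivatives at $V_i$. Once that identity is in hand, Lemma~\ref{lem3} converts the inner-product condition into the vertex smoothness condition and the equivalence falls out immediately.

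Since $B_{is}$ is supported only on the three edges $e_{ii_{s+r-1}}$ incident to $V_i$ ($r=1,2,3$), the inner product splits into three integrals of the form $\int_0^{\|e\|} f''_e(t)\,\lambda^{(s)}_{r,i}\bigl(1-t/\|e\|\bigr)\,dt$. The definition of $\mathcal{F}_p$ guarantees $f'_e\in AC$ and $f''_e\in L_p\subset L_1$ on each edge, so integration by parts is legitimate. On $e=e_{ii_{s+r-1}}$ the factor $1-t/\|e\|$ vanishes at the upper endpoint, so the boundary term reduces to $-\lambda^{(s)}_{r,i}\,f'_{ii_{s+r-1}}(0)$, and the remaining piece is $\tfrac{\lambda^{(s)}_{r,i}}{\|e\|}\int_0^{\|e\|} f'_e(t)\,dt = \tfrac{\lambda^{(s)}_{r,i}}{\|e_{ii_{s+r-1}}\|}\bigl(z_{i_{s+r-1}}-z_i\bigr)$, after invoking the interpolation values $f_e(0)=z_i$ and $f_e(\|e\|)=z_{i_{s+r-1}}$.

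Summing the three contributions produces the key identity
$$\langle F'',B_{is}\rangle \;=\; d_{is} \;-\; \sum_{r=1}^{3}\lambda^{(s)}_{r,i}\,f'_{ii_{s+r-1}}(0),$$
in which the first term is $d_{is}$ by its very definition and the second sum is exactly the vertex combination appearing in Lemma~\ref{lem3}. Ranging over $is\in N_B$, the condition $\langle F'',B_{is}\rangle=d_{is}$ for every $is$ is therefore equivalent to the vanishing of that vertex combination at every interior vertex, which by Lemma~\ref{lem3} is in turn equivalent to $F\in\mathcal{C}_p(E)$. The proof is thus essentially a dual reformulation of Lemma~\ref{lem3} via integration by parts. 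I do not anticipate any substantial obstacle; the only technical point is justifying the integration by parts at this regularity level, and that is immediate from the definition of $\mathcal{F}_p$.
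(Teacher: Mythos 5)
Your argument is correct and is exactly the intended one: the basic curve network $B_{is}$ is built so that integrating $F''$ against it by parts yields $\langle F'',B_{is}\rangle=d_{is}-\sum_{r=1}^{3}\lambda^{(s)}_{r,i}f'_{ii_{s+r-1}}(0)$, whence the equivalence reduces termwise to Lemma~\ref{lem3}. The paper itself gives no proof and defers to \cite{AEIV}, but your computation (including the vanishing of the boundary term at the far endpoint and the use of $f'_e\in AC$ to justify the integration by parts) is precisely the standard derivation behind the definition of $d_{is}$.
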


In \cite{V4} a full characterization of the solution to the extremal problem $(P_p)$
for $1<p<\infty$ was provided. Finding it comes down to the unique solution of a system of equations.
The following theorem was established in \cite{V4}.
\begin{Thm}\label{th1}
	In the case of strictly convex data problem ${({P}_p)}$, $1<p<\infty$, has a unique solution $F^*$. The second derivative of the solution $F^{*\prime\prime}$ has the form
	$$F^{*\prime\prime}=\left(\sum_{is\in N_B}\alpha_{is}B_{is}\right)_+^{q-1}$$
	where $1/p+1/q=1$, $(x)_+:=\max (x,0)$ and the coefficients $\alpha_{is}$ satisfy the following nonlinear system of equations
	\be{e22}
	\int_E \left(\sum_{is\in N_B}\alpha_{is}B_{is}\right)_+^{q-1}B_{kl}dt=d_{kl},\ \mbox{for}\ kl\in
	N_B.
\ee
\end{Thm}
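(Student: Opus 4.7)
The plan is to turn this constrained minimization into a convex optimization problem in $L_p$ and then apply duality / KKT. First I would use Lemma~\ref{lem4} to replace the minimization of $\|F''\|_p$ over $\mathcal{C}_p(E)$ by the equivalent problem of minimizing $\|G\|_p$ over the convex set
\[
\mathcal{K}:=\{G\in L_p(E)\ |\ G\ge 0 \text{ a.e. on every } e\in E,\ \langle G,B_{is}\rangle=d_{is}\ \text{for all } is\in N_B\}.
\]
By Corollary~\ref{cor1} the class $\mathcal{C}_p(E)$ is nonempty, so $\mathcal{K}\neq\emptyset$; and $\mathcal{K}$ is convex and closed in the strong (hence in the weak) topology of $L_p$.

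For existence and uniqueness, I would invoke the standard variational argument. The functional $G\mapsto\|G\|_p^p$ is continuous, coercive, and strictly convex on $L_p$ for $1<p<\infty$. Taking a minimizing sequence $\{G_n\}\subset\mathcal{K}$, coercivity gives boundedness, and reflexivity of $L_p$ yields a weakly convergent subsequence whose limit $G^*$ lies in $\mathcal{K}$ (the constraints $\langle\,\cdot\,,B_{is}\rangle=d_{is}$ are weakly continuous and $\{G\ge 0\}$ is weakly closed). Weak lower semicontinuity of the norm then gives $\|G^*\|_p\le\liminf\|G_n\|_p$, so $G^*$ is a minimizer; strict convexity forces uniqueness. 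The corresponding $F^*$ is reconstructed from $G^*=F^{*\prime\prime}$ together with the interpolation conditions and Lemma~\ref{lem3}; strict convexity of the data is used here to guarantee that the resulting admissible $F^*$ is well defined (the linear system determined by Lemmas~\ref{lem3}--\ref{lem4} has full rank, by Lemma~\ref{lem2}).

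To obtain the explicit form of $F^{*\prime\prime}$, I would write out the KKT conditions for the problem. Introduce Lagrange multipliers $\alpha_{is}\in\mathbb{R}$ for the equality constraints and a nonnegative multiplier $\mu\in L_q(E)$ for $G\ge 0$. First-order stationarity of the Lagrangian $\sum_e\int_0^{\|e\|}|G|^p/p - \sum_{is}\alpha_{is}(\langle G,B_{is}\rangle-d_{is}) -\langle\mu,G\rangle$ in the direction of arbitrary variations gives
\[
G^{p-1}=\sum_{is\in N_B}\alpha_{is}B_{is}+\mu,\qquad \mu\ge 0,\qquad \mu G=0\ \text{a.e.}
\]
(after absorbing the constant $1/p$ into the $\alpha_{is}$). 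On $\{G>0\}$ the complementary slackness forces $\mu=0$, so $G^{p-1}=\sum\alpha_{is}B_{is}$, while on $\{G=0\}$ one reads off $\sum\alpha_{is}B_{is}\le 0$. Hence in both regions
\[
G=\Bigl(\sum_{is\in N_B}\alpha_{is}B_{is}\Bigr)_+^{1/(p-1)}=\Bigl(\sum_{is\in N_B}\alpha_{is}B_{is}\Bigr)_+^{q-1},
\]
using $1/(p-1)=q-1$. Substituting this expression into the constraints $\langle G,B_{kl}\rangle=d_{kl}$ produces system \eqref{e22}.

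The main obstacle is the rigorous derivation of the KKT conditions in this infinite-dimensional, non-smooth setting, since the constraint $G\ge 0$ must be handled as an inequality constraint in $L_p$. I would address it by Fenchel--Rockafellar duality: the problem fits the standard framework of minimizing a strictly convex, coercive functional over the intersection of a closed convex cone with an affine subspace of finite codimension, for which a constraint qualification (e.g.\ a strictly positive $G_0\in\mathcal{K}$, provided by Lemma~\ref{lem1} applied to a smooth strictly convex interpolant) ensures the existence of multipliers and the validity of the KKT conditions. A secondary but lighter point is the consistency of the nonlinear system \eqref{e22}: its solvability is automatic from existence of $G^*$ and the linear independence of the $B_{is}$ from Lemma~\ref{lem2}.
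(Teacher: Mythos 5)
You should first be aware that the paper contains no proof of Theorem~\ref{th1}: it is imported verbatim from \cite{V4}, so there is no in-text argument to compare against. The closest argument actually carried out in this paper is the proof of Theorem~\ref{th3} for $p=\infty$, and it runs in the opposite direction to yours: it \emph{assumes} a solution $\alpha$ of the nonlinear system and then verifies optimality of the induced $F^*$ by a direct H\"older-inequality chain, leaving solvability of the system as a hypothesis. The treatment of $1<p<\infty$ in \cite{V4} follows the same verification pattern, with the coefficients $\alpha_{is}$ obtained as the solution of an explicit finite-dimensional dual extremal problem over $\mathbb{R}^{|N_B|}$ rather than as Lagrange multipliers extracted from the primal problem. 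Your route --- reduction to minimizing $\|G\|_p$ over $\mathcal{K}$, direct method in reflexive $L_p$ for existence, strict convexity of the norm for uniqueness --- is a genuinely different organization, and that half of your argument is correct and essentially complete; the formal KKT computation also produces the right exponent $1/(p-1)=q-1$ and the right system (\ref{e22}).

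The genuine gap is in justifying the multipliers. The constraint qualification you propose --- a pointwise strictly positive $G_0\in\mathcal{K}$ supplied by Lemma~\ref{lem1} --- does not do the job: for $1<p<\infty$ the cone $\{G\ge 0\}$ has \emph{empty interior} in $L_p(E)$, so a strictly positive element is not a Slater point, and neither the standard KKT theorem nor the usual interior-point form of Fenchel--Rockafellar duality applies on that basis. The qualification that actually works is finite-dimensional: one must show that the moment vector $d=(d_{kl})_{kl\in N_B}$ lies in the interior of the image of the cone $\{G\ge 0\}$ under the finite-rank map $G\mapsto(\langle G,B_{kl}\rangle)_{kl\in N_B}$; that interiority is where the strict convexity of the data should enter, not (as you suggest) in reconstructing $F^*$ from $G^*$, which on each edge is just a two-point boundary-value problem needing no rank condition. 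Relatedly, your closing claim that solvability of (\ref{e22}) is ``automatic from the existence of $G^*$ and Lemma~\ref{lem2}'' is circular: solvability of (\ref{e22}) \emph{is} the statement that the multipliers exist, i.e., precisely the step whose justification is missing. Once that step is supplied, optimality of the resulting $F^*$ is most cleanly confirmed by the H\"older argument of Theorem~\ref{th3} with $\bigl(\sum\alpha_{is}B_{is}\bigr)_+^{q-1}$ in place of $C\bigl(\sum\alpha_{is}B_{is}\bigr)_+^{0}$, which also reconciles your primal minimizer with the stated representation.
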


\section{The solution for $p=\infty$}\label{s2}

In this section we consider the case $p=\infty$ and the corresponding extremal problem

$$({\bf P}_{\infty})\quad  \mbox{\it\ Find\ }F^*\in {\cal C}_{\infty}(E)\  \mbox{\it\
	such that }\| F^{*\prime\prime}\|_{\infty}=\inf_{F\in {\cal C}_{\infty}(E)}\|
F''\|_{\infty},
$$
where the class ${{\cal C}}_{\infty}(E)$ is defined by (\ref{e100}).
First, we show that the problem has a solution whose restriction on every edge $e\in E$ is a quadratic spline with at most one knot in the interval
$(0,\|e\|)$.

The interpolation curve network
$F=\{f_e\}_{e\in E}$ is smooth if and only if for each vertex $V_i$ of degree $m_i\geq 3$ the tangent vectors at the point $P_i$ to the curves incident to $P_i$ are coplanar or, which is the same, any three of them are coplanar. According to Lemma~\ref{lem3} this holds if and only if
$$
\displaystyle { \lambda^{(s)}_{1,i}f'_{ii_s}(0)+\lambda^{(s)}_{2,i}f'_{ii_{s+1}}(0)+\lambda^{(
		s )} _{3,i}f'_{ii_{s+2}}(0)=0,\ is\in {N}_B},
$$
where $\lambda ^{(s)}_{1,i},\ \lambda^{(s)}_{2,i},\ \lambda^{(s)}_{3,i}$ are defined as the unique solution to system (\ref{e3}). Hence, the class ${{\cal C}}_{\infty}(E)$ can be characterized in the following way:
\begin{eqnarray*}
	{{\cal C}}_{\infty}(E)\,:=\,\{ F\,:\, F(V_i)=z_i,\ i=1,\dots ,n,\
	f'_e\in AC,\ f''_e\in L_{\infty},\ f''_e\geq 0\ \mbox{a. e.},\ e\in
	E,\\[1ex]
	\lambda^{(s)}_{1,i}f'_{ii_s}(0)+\lambda^{(s)}_{2,i}f'_{ii_{s+1}}(0)+\lambda^{(
		s )} _{3,i}f'_{ii_{s+2}}(0)=0,\ is\in {N}_B\}.
\end{eqnarray*}

If $F\in {{\cal C}}_{\infty}(E)$ then  $F'$ belongs to the following class of curve networks
\begin{eqnarray*}
	{{\cal C}}'_{\infty}(E)\,=\,\{ G=\{g_e\}_{e\in E}\,:\,
	g_e\in AC,\ g'_e\in L_{\infty},\ g_e\ \mbox{is monotonically increasing},\\[1ex]
	\int_0^{\|e\|}g_e(t)dt=z_j-z_i,\ e=e_{ij}\in E,\ 	\lambda^{(s)}_{1,i}g_{ii_s}(0)+\lambda^{(s)}_{2,i}g_{ii_{s+1}}(0)+\lambda^{(
		s )} _{3,i}g_{ii_{s+2}}(0)=0,\ is\in {N}_B\}.
\end{eqnarray*}

On the other hand, if $G\in {{\cal C}}'_{\infty}(E)$ then the primitive
$F$ of $G$ for which $F(V_1)=z_1$ belongs to the class ${{\cal
		C}}_{\infty}(E)$. Therefore the problem $({P}_{\infty})$ is equivalent to the following problem
	$$({\bf P}'_{\infty})\quad  \mbox{\it\ Find\ }G^*\in {\cal C}'_{\infty}(E)\  \mbox{\it\
		such that }\| G^{*\prime}\|_{\infty}=\inf_{G\in {\cal C}'_{\infty}(E)}\|
	G'\|_{\infty}.
	$$

Next we show that the problem $({P}'_{\infty})$ has a solution in the following class
\be{e337}
	{\cal L}_{\infty}(E):=\{G=\{g_e\}_{e\in E}\,:\,G\in
	{{\cal C}}'_{\infty}(E),\ g_e\ \mbox{\it is\ piecewise\ linear}\ \mbox{\it with\ at\ most\
		one\ knot\ in}\ (0,\|e\|),\ e\in E\}.
\ee

Let $A,\,a,\,b,\,c$ be real numbers, such that $c>0$, $a<b$ and $A<(a+b)c$. We denote by $M$ the set of functions defined in $[0,c]$ that satisfy the following four properties:
\begin{eqnarray*}
	(i)\ &&\ g\in AC,\ g'\in L_{\infty},\\
	(ii)\ &&\ g(0)=a,\,g(c)=b,\\
	(iii)\ &&\ \int_0^cg(t)dt=A,\\
	(iv)\ &&\ g\ \mbox{is monotonically increasing}.
\end{eqnarray*}
The following lemma holds.
\begin{Lem}\label{lem5} ({Iliev and Pollul}\footnote{This lemma has been formulated by {Iliev and Pollul} in \cite{IP2} for the case
		$a=0,\,b=c=A=1$ without proof.}) {In the class $M$ there exists a unique function 	
		$g^*$ such that	$\|g^{*\prime}\|_{\infty}=\inf_{g\in M}\|g'\|_{\infty}$. The function
		$g^*$ is piecewise linear with at most one knot in the interval $(0,c)$.}
\end{Lem}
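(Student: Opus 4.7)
The strategy is an envelope argument coupled with an explicit construction. For any $g\in M$ with $\|g'\|_\infty\le K$, the fundamental theorem of calculus for absolutely continuous functions gives $g(t)-a=\int_0^t g'(s)\,ds\le Kt$ and $b-g(t)=\int_t^c g'(s)\,ds\le K(c-t)$. Combined with monotonicity of $g$, these yield the pointwise bounds
\[
\underline g_K(t):=\max\bigl(a,\,b-K(c-t)\bigr)\ \le\ g(t)\ \le\ \min\bigl(b,\,a+Kt\bigr)=:\overline g_K(t),\quad t\in[0,c],
\]
provided $K\ge(b-a)/c$; otherwise $M$ admits no element with $\|g'\|_\infty\le K$. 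Both $\underline g_K$ and $\overline g_K$ are themselves piecewise linear with a single knot, at $c-(b-a)/K$ and $(b-a)/K$ respectively.

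A direct integration gives $\int_0^c\underline g_K\,dt=ac+(b-a)^2/(2K)$ and $\int_0^c\overline g_K\,dt=bc-(b-a)^2/(2K)$, so the condition $\int_0^c g\,dt=A$ is compatible with $\|g'\|_\infty\le K$ if and only if
\[
ac+\frac{(b-a)^2}{2K}\ \le\ A\ \le\ bc-\frac{(b-a)^2}{2K}.
\]
Inverting, the quantity $K^*:=\inf_{g\in M}\|g'\|_\infty$ equals $(b-a)^2/(2(A-ac))$ when $A<(a+b)c/2$, equals $(b-a)/c$ when $A=(a+b)c/2$, and equals $(b-a)^2/(2(bc-A))$ when $A>(a+b)c/2$. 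In each of these three cases I will exhibit an attainer by taking $g^*$ to be $\underline g_{K^*}$, the affine interpolant $t\mapsto a+(b-a)t/c$, or $\overline g_{K^*}$, respectively. Each candidate is piecewise linear with at most one knot in $(0,c)$, is monotonically increasing, satisfies the boundary and integral constraints by the integral computations above, and realizes $\|g^{*\prime}\|_\infty=K^*$. This simultaneously gives existence and the claimed structural form of $g^*$.

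For uniqueness, suppose that some $g\in M$ also satisfies $\|g'\|_\infty=K^*$. In the case $A<(a+b)c/2$ the envelope inequality gives $g(t)\ge\underline g_{K^*}(t)=g^*(t)$ pointwise on $[0,c]$, while $\int_0^c(g-g^*)\,dt=A-A=0$; since $g-g^*\ge0$ is continuous this forces $g\equiv g^*$. The case $A>(a+b)c/2$ is symmetric using the upper envelope, and the middle case is immediate because the affine function is the only $((b-a)/c)$-Lipschitz monotonically increasing function from $a$ to $b$ on $[0,c]$. I anticipate no serious obstacle in carrying this out; the only delicate point is extracting the pointwise envelope inequality rigorously from the four defining conditions of $M$, which is a routine application of the fundamental theorem of calculus for absolutely continuous functions.
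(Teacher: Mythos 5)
Your proof is correct, and it takes a genuinely different route from the paper's. The paper writes down the same extremal function (your $\underline g_{K^*}$, resp.\ $\overline g_{K^*}$, coincides with its explicit candidate with knot $t_0=((a+b)c-2A)/(b-a)$, resp.\ $t_0=2(bc-A)/(b-a)$), but it obtains the lower bound on $\|g'\|_\infty$ from the identity $\int_0^c(g-a)g'=(b-a)^2/2$ combined with H\"older's inequality, $\tfrac{(b-a)^2}{2}\le\|g-a\|_1\,\|g'\|_\infty=(A-ac)\|g'\|_\infty$, and symmetrically with $b-g$ when $A>(a+b)c/2$. Your pointwise envelope sandwich $\underline g_K\le g\le\overline g_K$ followed by integration of the envelopes is a little longer for the lower bound, but it buys something the paper's printed proof does not actually deliver: uniqueness. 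The lemma asserts a unique minimizer, yet the paper's argument only shows $\|g^{*\prime}\|_\infty\le\|g'\|_\infty$ for every $g\in M$; to get uniqueness from it one would have to analyze the equality case in H\"older. Your observation that any minimizer satisfies $g\ge\underline g_{K^*}=g^*$ pointwise while $\int_0^c(g-g^*)\,dt=0$, which forces $g\equiv g^*$ by continuity (with the symmetric argument for $A>(a+b)c/2$ and the Lipschitz rigidity argument in the middle case), closes that gap cleanly. One shared caveat: both arguments implicitly need $ac<A<bc$ for $M$ to be nonempty and for the formulas for $K^*$ to make sense, which is slightly stronger than the stated hypothesis $A<(a+b)c$; this is inherited from the paper and is not a defect of your write-up.
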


\noindent
\begin{proof} First we consider the case $A\leq (a+b)c/2$. Let $t_0=((a+b)c-2A)/(b-a)$ and
$$
g^*(t)=\left\{
\begin{array}{cc}
	a & \mbox{for}\ 0\leq t\leq t_0\\  & \\
	a+(t-t_0)(b-a)/(c-t_0)\qquad & \mbox{for}\
	t_0< t\leq c.
\end{array}
\right.
$$
Function $g^*$ belongs to $M$ and $\|g^{*\prime}\|_{\infty}=
{\displaystyle \frac{(b-a)^2}{2(A-ac)}}$. For arbitrary function
$g\in M$ we obtain from {H\"{o}lder}'s inequality
\begin{eqnarray*}
	\frac{(b-a)^2}{2}&=&|\int_0^c(g-a)g'|\leq \|g-a\|_1\,\|g'\|_{\infty}\\[1ex]
	&=&\|g'\|_{\infty}\int_0^c(g(t)-a)dt=(A-ac)\,\|g'\|_{\infty}
\end{eqnarray*}

Hence, ${\displaystyle
	\frac{(b-a)^2}{2(A-ac)}\leq\|g'\|_{\infty}}$, i.e.
$\|g^{*\prime}\|_{\infty}\leq\|g'\|$.
\vspace{1ex}

Now we consider the case $A>(a+b)c/2$. Let $t_0=2(bc-A)/(b-a)$ and
$$
g^*(t)=\left\{
\begin{array}{cc}
	a+t(b-a)/t_0\qquad & \mbox{for}\ 0\leq
	t\leq t_0\\  & \\
	b & \mbox{for}\ t_0<
	t\leq c. \end{array}
\right.
$$
The function $g^*$ belongs to $M$ and
$\|g^{*\prime}\|_{\infty}={\displaystyle\frac{(b-a)^2}{2(bc-A)}}$.
For arbitrary $g\in M$ from {H\"{o}lder}'s inequality we have
$$
\frac{(b-a)^2}{2}=|\int_0^c(b-g)g'|\leq \|b-g\|_1\,\|g'\|_{\infty}
=(bc-A)\|g'\|_{\infty}.
$$
Hence, ${\displaystyle \frac{(b-a)^2}{2(bc-A)}\leq\|g'\|_{\infty}}$,
i.e. $\|g^{*\prime}\|_{\infty}\leq\|g'\|_{\infty}.$

For $A\not=(a+b)c/2$ the function $g^*$ is piecewise linear with one knot in the interval
$(0,c)$, and for $A=(a+b)c/2$, $g^*$ is a linear function.
\end{proof}

\begin{Lem}\label{lem6}{\it It holds that
		$$
		\inf_{G\in {{\cal C}}'_{\infty}(E)}\|
		G'\|_{\infty}=\inf_{G\in {{\cal L}}_{\infty}(E)}\|
		G'\|_{\infty},
		$$
		where the class ${\cal L}_{\infty}(E)$ is defined by (\ref{e337}).
	}
\end{Lem}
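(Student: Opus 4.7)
The plan is to prove the two infima coincide by establishing both inequalities. Since every $G\in {\cal L}_{\infty}(E)$ also belongs to ${\cal C}'_{\infty}(E)$ by definition (\ref{e337}), the inequality $\inf_{G\in {\cal C}'_{\infty}(E)}\|G'\|_{\infty}\le \inf_{G\in {\cal L}_{\infty}(E)}\|G'\|_{\infty}$ is immediate. For the reverse inequality, the plan is to show that every $G\in {\cal C}'_{\infty}(E)$ can be replaced, edge by edge, by a piecewise linear curve network $\tilde G \in {\cal L}_{\infty}(E)$ with $\|\tilde G'\|_{\infty}\le \|G'\|_{\infty}$.

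The construction proceeds as follows. Fix $G=\{g_e\}_{e\in E}\in {\cal C}'_{\infty}(E)$ and consider any edge $e=e_{ij}\in E$. Set $a:=g_e(0)$, $b:=g_e(\|e\|)$, $c:=\|e\|$, and $A:=z_j-z_i$. Since $g_e$ is monotonically increasing and $\int_0^c g_e(t)\,dt=A$, one has $ac\le A\le bc$, so in particular $A<(a+b)c$. If $a<b$, then Lemma~\ref{lem5} applies and yields a unique $\tilde g_e\in M$ that is piecewise linear with at most one knot in $(0,c)$ and satisfies $\|\tilde g_e'\|_{\infty}\le \|g_e'\|_{\infty}$. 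If $a=b$, then monotonicity forces $g_e\equiv a=A/c$, and we simply take $\tilde g_e:=g_e$, which is linear (with no knots) and has $\tilde g_e'\equiv 0$. In either case, the replacement $\tilde g_e$ matches $g_e$ at both endpoints, preserves the integral $\int_0^{\|e\|}\tilde g_e(t)\,dt=z_j-z_i$, is monotonically increasing, and lies in $AC$ with $\tilde g_e'\in L_{\infty}$.

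Define $\tilde G:=\{\tilde g_e\}_{e\in E}$. To conclude that $\tilde G\in {\cal L}_{\infty}(E)$, it remains to verify the coplanarity (smoothness) conditions
$$
\lambda^{(s)}_{1,i}\tilde g_{ii_s}(0)+\lambda^{(s)}_{2,i}\tilde g_{ii_{s+1}}(0)+\lambda^{(s)}_{3,i}\tilde g_{ii_{s+2}}(0)=0,\quad is\in N_B.
$$
But by construction $\tilde g_e(0)=g_e(0)$ for every edge $e$, so these identities follow directly from the corresponding identities satisfied by $G\in {\cal C}'_{\infty}(E)$. Consequently $\tilde G\in {\cal L}_{\infty}(E)$ and
$$
\|\tilde G'\|_{\infty}=\max_{e\in E}\|\tilde g_e'\|_{\infty}\le \max_{e\in E}\|g_e'\|_{\infty}=\|G'\|_{\infty}.
$$
Taking the infimum over $G\in {\cal C}'_{\infty}(E)$ gives $\inf_{G\in {\cal L}_{\infty}(E)}\|G'\|_{\infty}\le \inf_{G\in {\cal C}'_{\infty}(E)}\|G'\|_{\infty}$, completing the proof.

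The only point requiring care is the verification that the local replacement based on Lemma~\ref{lem5} is compatible with the global vertex conditions that define ${\cal C}'_{\infty}(E)$. This is handled by the key observation that the smoothness conditions involve only the boundary values $g_e(0)$, and that these boundary values, together with $g_e(\|e\|)$ and the integral $z_j-z_i$, are exactly the data that are preserved by the edge-wise optimization in Lemma~\ref{lem5}. I expect no further obstacle beyond checking the hypothesis $A<(a+b)c$ and separating the degenerate case $a=b$ as done above.
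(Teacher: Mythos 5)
Your proof takes essentially the same route as the paper's: the containment ${\cal L}_{\infty}(E)\subseteq{\cal C}'_{\infty}(E)$ gives one inequality for free, and the reverse inequality is obtained by applying Lemma~\ref{lem5} edge by edge with $c=\|e\|$, $a=g_e(0)$, $b=g_e(\|e\|)$, $A=z_j-z_i$. The paper's proof is exactly this, stated in two lines; your version is more careful in that you separate the degenerate case $a=b$ (which Lemma~\ref{lem5} excludes via $a<b$) and you explicitly check that the vertex conditions survive the replacement because they involve only the endpoint values $g_e(0)$, which the edgewise optimization preserves.

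One small slip worth flagging: the inference ``$ac\le A\le bc$, so in particular $A<(a+b)c$'' is false when $a<0$ (take $a=-1$, $b=1$, $c=1$, $A=1/2$: then $(a+b)c=0<A$). What the construction in Lemma~\ref{lem5} actually needs is $ac<A<bc$, and that is what you should verify; it holds strictly whenever $a<b$, since $g_e$ is continuous and monotonically increasing with $g_e(0)=a<b=g_e(c)$, so $g_e<b$ near $0$ and $g_e>a$ near $c$. The hypothesis $A<(a+b)c$ as printed in Lemma~\ref{lem5} appears to be a misstatement of this condition (note it even fails for the Iliev--Pollul case $a=0$, $b=c=A=1$ cited in the footnote), so the defect is inherited from the paper rather than introduced by you; the paper's own proof of Lemma~\ref{lem6} does not check any hypothesis at all.
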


\begin{proof} According to Corollary~\ref{cor1} the class ${\cal
	C}_{\infty}(E)$ is nonempty, hence ${\cal
	C}'_{\infty}(E)$ is nonempty too. Let $G\in {{\cal C}}'_{\infty}(E)$.
We apply Lemma \ref{lem5} for every edge $\ e=e_{ij}$ in $E$ for $c=\|e\|,\ a=g_e(0),\
b=g_e(\|e\|)$, $A=z_j-z_i$ and we obtain curve network  $G_1\in {\cal
	L}_{\infty}(E)$ such that $\|G_1'\|_{\infty}\leq\|G'\|_{\infty}$.
\end{proof}

\begin{Thm}\label{th2}{The problem $({P}'_{\infty})$ has a solution in the class ${\cal L}_{\infty}(E)$.}
\end{Thm}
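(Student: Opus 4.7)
The plan is to invoke Lemma~\ref{lem6} to reduce the infimum to one over $\mathcal{L}_\infty(E)$, and then to exhibit an explicit minimizer there via a compactness argument, exploiting the fact that each network in $\mathcal{L}_\infty(E)$ is described by only finitely many real parameters per edge. On every edge $e=e_{ij}\in E$, an element $g_e$ of a network in $\mathcal{L}_\infty(E)$ is monotonically increasing, piecewise linear with at most one knot in $[0,\|e\|]$, and has prescribed integral $A_e := z_j - z_i$; hence $g_e$ is determined by a quadruple $(a_e,b_e,t_e,\sigma_e)$, where $a_e=g_e(0)$, $b_e=g_e(\|e\|)$, $t_e\in[0,\|e\|]$ is the knot location, and $\sigma_e\in\{0,1\}$ is a discrete shape indicator distinguishing the two cases of Lemma~\ref{lem5}.

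Pick a minimizing sequence $\{G_k\}\subset\mathcal{L}_\infty(E)$ with $L_k := \|G_k'\|_\infty \to L^*$, where $L^*$ is the common infimum from Lemma~\ref{lem6}. Monotonicity together with the integral constraint yields $a_{k,e}\|e\| \leq A_e \leq b_{k,e}\|e\|$, while $b_{k,e}-a_{k,e}\leq \|e\|L_k$ follows from the uniform derivative bound; combining these gives two-sided bounds on $a_{k,e}$ and $b_{k,e}$ independent of $k$. Since the knots $t_{k,e}$ lie in the compact interval $[0,\|e\|]$ and $\sigma_{k,e}$ takes finitely many values, after passing to a subsequence all parameters converge: $a_{k,e}\to a^*_e$, $b_{k,e}\to b^*_e$, $t_{k,e}\to t^*_e$, with $\sigma_{k,e}$ eventually equal to a fixed $\sigma^*_e$ on each edge. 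Let $G^*$ denote the network in $\mathcal{L}_\infty(E)$ carried by these limit parameters.

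I would finish by verifying $G^*\in\mathcal{L}_\infty(E)$ and $\|G^{*\prime}\|_\infty\leq L^*$. Pointwise convergence $g_{k,e}(t)\to g^*_e(t)$ with the dominated convergence theorem yields $\int_0^{\|e\|}g^*_e = A_e$; monotonicity survives because $a^*_e\leq b^*_e$; and the linear coplanarity conditions at every vertex $V_i$ pass to the limit by continuity of evaluation at $0$. For the slope, $\|g'_{k,e}\|_\infty$ is an explicit ratio with denominator $\|e\|-t_{k,e}$ or $t_{k,e}$ depending on $\sigma_{k,e}$; the bound $b_{k,e}-a_{k,e}\leq \|e\|L_k$ forces $a^*_e=b^*_e$ whenever the denominator degenerates, in which case $g^*_e$ is constant with zero slope, while in all other configurations continuity of the ratio gives $\|g^{*\prime}_e\|_\infty = \lim_k \|g'_{k,e}\|_\infty$. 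Taking maxima over edges yields $\|G^{*\prime}\|_\infty\leq L^*$, so $G^*$ is the sought minimizer. The main delicacy I expect lies in this continuity/degeneracy step: one must rule out an oscillating $\sigma_{k,e}$ or a knot drifting to an endpoint from spoiling the limiting slope, and it is the uniform derivative bound that controls both phenomena.
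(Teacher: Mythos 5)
Your proof is correct and follows essentially the same route as the paper's: a minimizing sequence in $\mathcal{L}_{\infty}(E)$ (after the reduction via Lemma~\ref{lem6}), compactness of the finitely many parameters describing each edge function, and passage to the limit in the integral, monotonicity, and coplanarity constraints. You in fact supply details the paper leaves implicit, namely the two-sided bounds on the endpoint values coming from monotonicity, the integral constraint and the uniform slope bound, and the treatment of a knot degenerating to an endpoint of the edge.
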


\begin{proof}  Let  $l=\inf_{G\in {\cal L}_{\infty}(E)}\|G'\|_{\infty}$ and
$G_{\nu}=\{g_{\nu,\,e}\}_{e\in E}$, $\nu =1,\dots$ be a sequence of functions in
${\cal L}_{\infty}(E)$ such that
$\lim_{\nu\rightarrow\infty}\|G'_{\nu}\|_{\infty}=l$. Let $e$ be a fixed edge in $E$.
From Lemma~\ref{lem6} it follows that the functions $g_{\nu,e},\ \nu=1,2,\dots$ are piecewise linear with at most one knot in $(0,\|e\|)$ and are completely determined by their values $g_{\nu,\,e}(0)$ and
$g_{\nu,\,e}(\|e\|)$ which in turn form a bounded sequence when
$\nu =1,2,\dots$ . Therefore the sequence $\{g_{\nu ,e}\},\
\nu=1,2,\dots$ has a convergent subsequence with  limit function $g^*_e$ as $\nu\rightarrow\infty$.
The function $g_e^*$ is monotonically increasing, piecewise linear with at most one knot in $(0,\|e\|)$, and
$\int_0^{\|e\|}g^*_e(t)dt=z_j-z_i$. By choosing consecutively convergent subsequences  on each edge $e\in E$ we construct a convergent subsequence
$\{G_{\nu}\}_{\nu=1}^{\infty}$. Let us denote its limit by
$G^*=\{g^*_e\}_{e\in E}$. We have $\|G^{*\prime}\|_{\infty}=l$.
To prove that $G^*$ belongs to the class ${\cal L}_{\infty}(E)$ it suffices to verify the smoothness conditions
$
\lambda^{(s)}_{1,i}g^*_{ii_s}(0)+\lambda^{(s)}_{2,i}g^*_{ii_{s+1}}(0)+\lambda^{
( s )} _{3,i}g^*_{ii_{s+2}}(0)=0,\ is\in {N}_B.
$

The latter follows from  the limit transition in the corresponding smoothness conditions for the curve networks$\{G_{\nu}\}_{\nu=1}^{\infty}$,
$
\lambda^{(s)}_{1,i}g_{\nu,ii_s}(0)+\lambda^{(s)}_{2,i}g_{\nu,ii_{s+1}}(0)+
\lambda^{ ( s )} _{3,i}g_{\nu,ii_{s+2}}(0)=0,\ is\in {N}_B.
$
\end{proof}

\begin{Cor}\label{cor2}{\it The extremal problem $({P}_{\infty})$ has a solution whose restriction on every edge $e\in E$ is a quadratic spline with at most one knot in the interval $(0,\|e\|)$.}
\end{Cor}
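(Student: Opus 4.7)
The plan is to transfer the result of Theorem~\ref{th2} from the derivative setting back to the original problem by integration, since Corollary~\ref{cor2} is essentially just a restatement of Theorem~\ref{th2} in terms of $F^*$ instead of $G^*$.

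First, I would apply Theorem~\ref{th2} to obtain a solution $G^* = \{g^*_e\}_{e\in E} \in \mathcal{L}_\infty(E)$ to the problem $(P'_\infty)$. By the definition of $\mathcal{L}_\infty(E)$ in \eqref{e337}, each $g^*_e$ is piecewise linear with at most one knot in $(0,\|e\|)$. Next, I would define $F^*$ as the primitive of $G^*$ satisfying $F^*(V_1) = z_1$, interpreted on the network by integration along edges starting from the data values at vertices. The remarks made between the definitions of $\mathcal{C}'_\infty(E)$ and problem $(P'_\infty)$ already guarantee that this primitive lies in $\mathcal{C}_\infty(E)$: the integral conditions $\int_0^{\|e\|} g^*_e(t)\,dt = z_j - z_i$ built into $\mathcal{C}'_\infty(E)$ ensure consistent vertex interpolation, the monotonicity of $g^*_e$ yields $f^{*\prime\prime}_e \geq 0$ a.e., and the smoothness conditions on $g^*_e(0)$ via the coefficients $\lambda_{r,i}^{(s)}$ translate via Lemma~\ref{lem3} into the tangent-plane condition required for $F^* \in C^1(D)$.

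Second, I would invoke the equivalence of problems $(P_\infty)$ and $(P'_\infty)$ noted in Section~\ref{s2}: for $F \in \mathcal{C}_\infty(E)$ its derivative network $F' = \{f'_e\}$ lies in $\mathcal{C}'_\infty(E)$, and conversely every $G \in \mathcal{C}'_\infty(E)$ integrates to some $F \in \mathcal{C}_\infty(E)$ with $F' = G$ (up to the choice of $F(V_1) = z_1$). Since $\|F''\|_\infty = \|G'\|_\infty$ under this correspondence, the infima coincide and the optimal $G^*$ from Theorem~\ref{th2} yields an optimal $F^*$ for $(P_\infty)$.

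Finally, on each edge $e \in E$, since $f^{*\prime}_e = g^*_e$ is piecewise linear with at most one knot in $(0,\|e\|)$, integration shows that $f^*_e$ is a $C^1$ function that is a quadratic polynomial on each piece, i.e.\ a quadratic spline with at most one knot in $(0,\|e\|)$. This is exactly the claim of the corollary. I expect no real obstacle here, as all the substantive work, the compactness argument producing the limit $G^*$ and the verification of the smoothness relations under limit transition, has already been carried out in Theorem~\ref{th2}; the corollary is a direct geometric reinterpretation via the primitive.
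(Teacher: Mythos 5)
Your proposal is correct and follows exactly the route the paper intends: the corollary is an immediate consequence of Theorem~\ref{th2} together with the equivalence of $(P_{\infty})$ and $(P'_{\infty})$ established just before it, with integration turning the piecewise linear $g^*_e$ into a quadratic spline with at most one knot. The paper gives no separate argument for the corollary, so your spelled-out version matches its (implicit) proof.
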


\begin{Thm}\label{th3}
{Let there exist $C>0$ such that the system
		\begin{equation}\label{e222}
		\int_EC \left( \sum_{is\in N_B}\alpha_{is}B_{is}\right)^0_+
		B_{kl}=d_{kl},\ kl\in N_B
	\end{equation}
has a solution for $\alpha_{is}\in\mathbb{R}$, $is\in N_B$. Then the function	
		$F^*\in  {\cal C}_{\infty}(E)$ such that
	\be{e101}
	F^{*\prime\prime}=C\left(
	\sum_{is\in {\cal N}_B}\alpha_{is}B_{is}\right)^0_+
	\ee
		solves the problem $( {P}_{\infty})$. }
\end{Thm}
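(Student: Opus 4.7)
The plan is to check that $F^*\in {\cal C}_\infty(E)$, read off its extremal norm, and then use a direct linear duality identity to show it is optimal. Because $F^{*\prime\prime}$ takes only the two values $0$ and $C$, it is non-negative and in $L_\infty$, so the edge-convexity and $L_\infty$ regularity in (\ref{e100}) are automatic. I would define $F^*$ edge by edge by integrating $F^{*\prime\prime}$ twice on $[0,\|e\|]$, fixing $f^*_e(0)=z_i$ and choosing $f^{*\prime}_e(0)$ so that $f^*_e(\|e\|)=z_j$ (which uniquely determines the two constants of integration on each edge $e=e_{ij}$). Smoothness at the interior vertices then follows from Lemma~\ref{lem4}, since the assumption (\ref{e222}) is exactly the system $\langle F^{*\prime\prime},B_{kl}\rangle=d_{kl}$ for every $kl\in N_B$. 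Hence $F^*\in {\cal C}_\infty(E)$, and since $F^{*\prime\prime}\in\{0,C\}$ we obtain $\|F^{*\prime\prime}\|_\infty=C$ whenever $\Phi:=\sum_{is\in N_B}\alpha_{is}B_{is}$ is strictly positive on a set of positive measure.

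For the optimality step I would test an arbitrary competitor $F\in {\cal C}_\infty(E)$ against the same linear combination $\Phi$. By Lemma~\ref{lem4}, $\langle F'',B_{is}\rangle=d_{is}$ for every $is\in N_B$; multiplying each equation by $\alpha_{is}$, summing, and invoking (\ref{e222}) gives
\[
\int_E F''\,\Phi \;=\; \sum_{is\in N_B}\alpha_{is}d_{is} \;=\; C\int_E \Phi_+,
\]
where the last identity uses $(\Phi)^0_+\cdot\Phi=\Phi_+$. Because $F''\ge 0$ almost everywhere and $\Phi\le\Phi_+$,
\[
C\int_E \Phi_+ \;=\; \int_E F''\,\Phi \;\le\; \int_E F''\,\Phi_+ \;\le\; \|F''\|_\infty\int_E\Phi_+ ,
\]
and dividing through by $\int_E\Phi_+>0$ yields $\|F''\|_\infty\ge C=\|F^{*\prime\prime}\|_\infty$, which is the desired extremal inequality.

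The main obstacle I anticipate is the degenerate case $\int_E\Phi_+=0$, in which $(\Phi)^0_+\equiv 0$, $F^{*\prime\prime}\equiv 0$, and the duality identity collapses. Here (\ref{e222}) forces $d_{kl}=0$ for every $kl\in N_B$, so by Lemma~\ref{lem4} every $F\in {\cal C}_\infty(E)$ satisfies $\langle F'',B_{kl}\rangle=0$; combined with $F''\ge 0$ this makes the infimum equal to $0=\|F^{*\prime\prime}\|_\infty$ and the conclusion trivial. A secondary technicality is confirming that the per-edge choice of integration constants is globally consistent with smoothness at every vertex; this is exactly the content of Lemma~\ref{lem4} as already used in \cite{AEIV,V4}, so no new argument is needed. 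A pleasant byproduct of the construction is that, on each edge, $\Phi|_e$ is affine, so $\{\Phi|_e>0\}$ is a subinterval of $[0,\|e\|]$ with at most one boundary point inside $(0,\|e\|)$; thus $F^{*\prime\prime}|_e$ is a step function with at most one jump and $F^*|_e$ is a quadratic spline with at most one knot, in agreement with Corollary~\ref{cor2}.
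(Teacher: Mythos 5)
Your argument is essentially the paper's own proof: the same duality chain combining H\"older's inequality, the pointwise bound $\Phi\le\Phi_+$ valid because $F''\ge0$, and Lemma~\ref{lem4} to replace $\sum_{is}\alpha_{is}\langle F'',B_{is}\rangle$ by $C\|\Phi_+\|_1$, yielding $\|F''\|_\infty\ge C=\|F^{*\prime\prime}\|_\infty$. Your two additions --- the explicit edge-by-edge construction showing $F^*\in{\cal C}_\infty(E)$, and the treatment of the degenerate case $\int_E\Phi_+=0$ (which the paper's final division implicitly assumes away) --- are sound refinements of the same route rather than a different one.
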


\begin{proof}
	Let $F=\{f_e\}_{e\in {\cal N}_B}$
be an arbitrary curve network from ${{\cal C}}_{\infty}(E)$.
From {H\"{o}lder}'s inequality we obtain
\be{e19}
\|F''\|_{\infty}\|(\sum_{is\in {\cal N}_B}\alpha_{is}B_{is})_+
\|_1 \geq \langle\ F'',(\sum_{is\in {\cal
	N}_B}\alpha_{is}B_{is})_+ \rangle\ . \ee

Since $f_e''\geq 0$ for $e\in E$ then
\be{e20}
\langle F'',(\sum_{is\in {\cal N}_B}\alpha_{is}B_{is})_+ \rangle \geq\
\langle F'',
\sum_{is\in {\cal N}_B}\alpha_{is}B_{is} \rangle =\ \sum_{is\in {\cal N}_B}\alpha_{is}\langle F'',B_{is}\rangle .
\ee
Since $F^*,F\in  {\cal C}_{\infty}(E)$ then from Lemma~\ref{lem4}
it follows
$$\langle F'',B_{is}\rangle=d_{is}=\langle F^{*\prime\prime},B_{is}\rangle,
$$  and we obtain consecutively
\begin{eqnarray}
\sum_{is\in {\cal N}_B}\alpha_{is} \langle F'',B_{is}\rangle &=&
\sum_{is\in {\cal N}_B}\alpha_{is}\langle
F^{*\prime\prime},B_{is}\rangle =\langle F^{*\prime\prime},
\sum_{is\in {\cal N}_B}\alpha_{is}B_{is}\rangle \nonumber\\
&&\label{e21}\\[1ex]
&=&\langle
\ C\ (
\sum_{is\in {\cal N}_B}\alpha_{is}B_{is})^0_+,
\sum_{is\in {\cal N}_B}\alpha_{is}B_{is}\rangle
=C\ \|(\sum_{is\in {\cal N}_B}\alpha_{is}B_{is})_+\|_1.\nonumber
\end{eqnarray}

From (\ref{e19}) and (\ref{e20}) it follows $\|F^{\prime\prime}\|_{\infty}\geq C$.
Since $\|F^{*\prime\prime}\|_{\infty}=C$ then
$F^*$ solves the problem $( {P}_{\infty})$.
\end{proof}

\begin{rmk}\label{zab1}
{Coefficients $\alpha_{is}$ are determined up to a constant factor.}
\end{rmk}

\section{Example}\label{s3}

As an illustration of our theoretical results established in Section~\ref{s2}, here we present and discuss the solution for $p=\infty$ to a simple example.

\begin{Exmp}\label{exmp1}
We consider data obtained from a regular triangular pyramid. They are $P_1=(-1/2,-\sqrt{3}/6,0)$, $P_2=(1/2,-\sqrt{3}/6,0)$, $P_3=(0,\sqrt{3}/3,0)$, and $P_4=(0,0,-1/2)$.
The vertices of the associated triangulation $T$ are $V_1=(-1/2,-\sqrt{3}/6)$, $V_2=(1/2,-\sqrt{3}/6)$, $V_3=(0,\sqrt{3}/3)$, and $V_4=(0,0)$. The set of indices defining the edges of $T$ is $N_B\,=\,\{ 12,23,31,41,42,43\}$. The solution $F^*=\{f^*_{ij}\}_{ij\in N_B}$ for $p=\infty$ whose restriction on every edge is a quadratic spline with at most one knot can be computed directly. It is
$$
\begin{array}{c}
	f^*_{12}(t)=f^*_{23}(t)=f^*_{31}(t)=3(t^2-t)/2, \ 0\leq t\leq 1;\\[1ex]
	f^*_{i4}(t)=3t^2/2-\sqrt{3}t,\ 0\leq t\leq \sqrt{3}/3,\quad i=1,2,3.
\end{array}
$$
The $L_{\infty}$-norm of the second derivative of $F^*$ is $\|F^{*\prime\prime}\|_{\infty}=3$. The triangulation $T$ and the corresponding edge convex minimum $L_{\infty}$-norm network are shown in Figure~\ref{f2}.

We note that in this case $F^{*\prime\prime}$ can not be represented in the form (\ref{e101}). Hence, the corresponding system (\ref{e222}) has no solution.

	\end{Exmp}

\begin{figure}
	\centering
	\begin{minipage}[b]{2.in}
		\includegraphics[width=.85\textwidth]{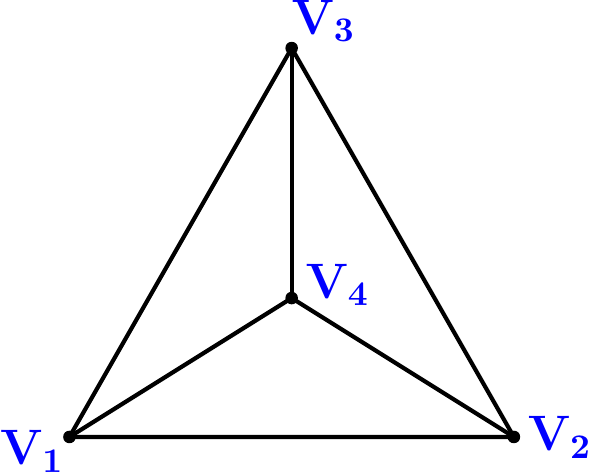}
	\end{minipage}
	~~~~~~~~~
	\begin{minipage}[b]{2.2in}
			\includegraphics[width=.95\textwidth]{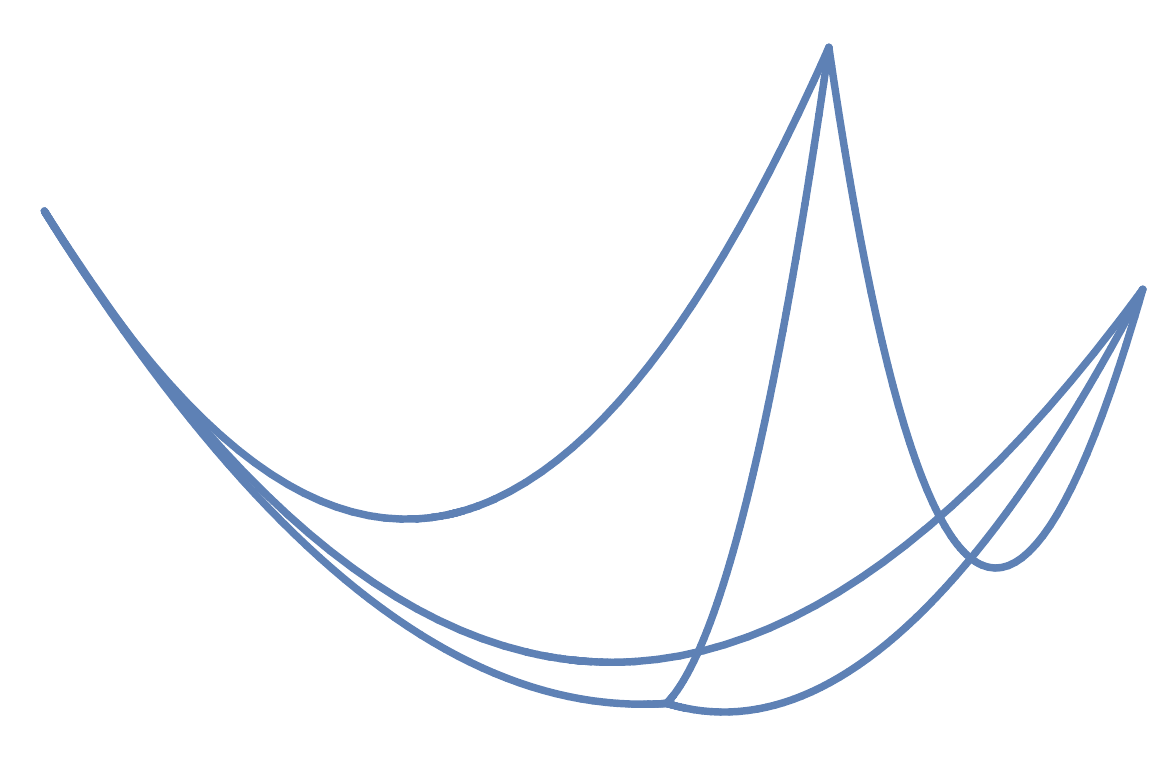}
		\centering
		{${p=\infty,\ \|F^{*\prime\prime}\|_{\infty}=3.}$}
	\end{minipage}
	\caption{Example~\ref{exmp1}: (left) the triangulation $T$; (right) the corresponding edge convex minimum $L_{\infty}$-norm network $F^*$}\label{f2}
\end{figure}

\section{Conclusions}\label{s4}

We have shown the existence of a solution to the extremal problem for interpolation of convex scattered data in $\mathbb{R}^3$ using edge convex minimum $L_p-norm$ networks for $p=\infty$. We characterized it at each edge as a quadratic spline with at most one knot whose second derivative is either zero, or constant C>0. The question about the uniqueness of the solution remains open although the solution for related problems in the univariate case is not unique, see \cite{IP1} for the constrained problem and also \cite{deB1} for the unconstrained problem.

The solution for $p=\infty$ for given data can be obtained by solving the system (\ref{e222}) in the case where it has a solution. In the case $1<p<\infty$ the corresponding system \ref{e22} has been solved by a Newton-type algorithm \cite{V3}. It is not clear if this approach works here.


\section*{Acknowledgments}
This work was supported in part by Sofia University Science Fund Grant No. 80-10-109/2022

\nocite{*}
\bibliography{ARXIV2022bibfile}

\end{document}